\DeclareMathOperator*{\einf}{ess\ inf}
\theoremstyle{plain}
\newtheorem{theorem}{Theorem}[section]
\newtheorem{lemma}[theorem]{Lemma}
\newtheorem{remark}[theorem]{Remark}
\newtheorem{proposition}[theorem]{Proposition}
\theoremstyle{definition}
\newtheorem{definition}[theorem]{Definition}
\begin{document}
\title[Nonlocal singular elliptic problems in Musielak-Sobolev spaces]{Positive Ground State Solutions to a Nonlocal Singular Elliptic Problem}
\author[Mustafa Avci]{Mustafa Avci}
\address{Mustafa Avci \\
Department of Finance and Management Science, Edwards School of Business, University of Saskatchewan, Canada}
\email{{\href{mailto: M. Avci <avcixmustafa@gmail.com>}{avcixmustafa@gmail.com}} (Primary), {\href{mailto: M. Avci <avci@edwards.usask.ca>}{avci@edwards.usask.ca}}}

\keywords{Nonlocal elliptic problem, singular problem, Musielak-Orlicz-Sobolev spaces \\
\hspace*{.3cm} \textit{2000 Mathematics Subject Classifications}: 35J40; 35J60; 35J70}

\begin{abstract}
In the present paper, we study the existence and uniqueness of solutions to some nonlocal singular elliptic problem under Dirichlet boundary condition. Problem is settled in Musielak-Sobolev spaces.
\end{abstract}

\maketitle

\centerline{}

\numberwithin{equation}{section}

\section{Introduction}
In this article, we are concerned with a nonlocal singular elliptic problem of the form
\begin{equation}
\begin{cases}
\begin{array}{rlll}
-\mathcal{A}\left(\int_{\Omega}\Phi(x,|\nabla u|)dx\right)\mathrm{div}(a(x,|\nabla u|)\nabla u)& = & g(x)u^{-\gamma(x)} \text{ in }\Omega, \\
u & > & 0 \text{ in }\Omega,\\
u & = & 0 \text{ on }\partial \Omega,
\end{array}
\end{cases}
\label{e1.1}
\end{equation}
where $\Omega$ is a bounded domain in $\mathbb{R}^N$ $(N\geq 3)$ with smooth boundary $\partial\Omega$, $\gamma:\overline{\Omega}\rightarrow (0,1)$ is a continuous function, $\mathcal{A}$ and $g$ are continuous functions.\\
The function $\varphi(x,t):=a(x,|t|)t$ is an increasing homeomorphism from $\Omega\times\mathbb{R}$ onto $\mathbb{R}$. We want to remark that if we let $a(x,t)=|t|^{p(x)-2}$, where $p(x)$ is a continuous function on $\overline{\Omega}$ with $\inf_{x \in \overline{\Omega}}p(x)>1$, equation \ref{e1.1} turns into the well-known singular $p(x)$-Kirchhoff equation. If we additionally consider the case $\mathcal{A}(t)=1$, equation \ref{e1.1} becomes singular the $p(x)$-Laplace equation, a generalization of $p$-Laplace equation, given by $\mathrm{div}(|\nabla u|^{p(x)-2}\nabla u)=f(x,u)$, $1<p(x)<N$. Therefore, equation \ref{e1.1} particularly generalizes the problems involving variable exponent. This kind of equations have been intensively studied by many authors for the past two decades due to its significant role in many fields of mathematics, such as in the study of calculus of variations, partial differential equations \cite{Acerbi,DCU,Diening}, but also for their use in a variety of physical and engineering contexts: the modeling of electrorheological fluids \cite{Ruzicka}, the analysis of Non-Newtonian fluids \cite{Zhikov}, fluid flow in porous media \cite{Amaziane}, magnetostatics \cite{Cekic}, image restoration \cite{Blomgren}, and capillarity phenomena \cite{Avci0}, see also, e.g., \cite{Benali,Alves,AS,Avci6,Avci4,Bonanno1,Boureanu2,Chung2,Heidarkhani,Yuce2} and references therein. Therefore, equation \eqref{e1.1} may represent a variety of mathematical models corresponding to certain phenomena:\\
For $\varphi(t):=p|t|^{p-2}t$;
\begin{itemize}
\item Nonlinear elasticity: $\varphi (t)=\left(1+t^{2}\right) ^{\alpha }-1,$ $\alpha >\frac{1}{2},$
\item Plasticity: $\varphi (t)=t^{\alpha}\left(\log\left( 1+t\right)\right)^{\beta},$ $\alpha \geq 1,\beta >0,$
\item Generalized Newtonian fluids: \ $\varphi(t)=\int_{0}^{t}s^{1-\alpha }\left(\sinh^{-1}s\right)^{\beta}ds,$ \\ $0\leq\alpha\leq 1,\beta>0.$
\end{itemize}
For $\varphi(t)=\varphi(x,t):=p(x)|t|^{p(x)-2}t$;
\begin{itemize}
\item There is a new model for image restoration given in \cite{CLR}. In this model, main aim is to recover an image, $u$, from an observed, noisy image, $u_{0}$, where the two are related by $u_{0}=u+noise$. The proposed model incorporates the strengths of the various types of diffusion arising from
the minimization problem
$$
E(u)=\int_{\Omega }\left[ \left\vert \nabla u\right\vert ^{p\left( x\right)}+\lambda \left( u-u_{0}\right) ^{2}\right] dx
$$
for $1\leq p\left( x\right) \leq 2$, where $\int_{\Omega }\left\vert \nabla u\right\vert ^{p\left( x\right) }dx$ is a regularizing term to remove the noise and $\lambda \geq 0$.
\end{itemize}
Recently, there has been some paper dealing with nonlocal singular problems see, e.g, \cite{Lei,Liao,Mu,Saodi,Yijing} and references therein. However, to the best knowledge of the author, problem \eqref{e1.1} is not covered in the literature.

\section{Preliminaries}
We use the theory of Orlicz spaces since problem \ref{e1.1} contains a nonhomogeneous function $\varphi$ in the nonlinear differential operator $\mathrm{div}(a(x,\cdot))$. Therefore, we start with some basic concepts of Orlicz spaces. For more details, we refer the readers to the monographs \cite{Adams,Krasnosels'kii,Musielak,RenRao}, and to the papers \cite{XLFan1,Petteri,Hudzik,MihaRadu}.

The function $a(x,t): \Omega\times \mathbb{R}\rightarrow(0,\infty)$ is a function such that the mapping $\varphi(x,t): \Omega \times \mathbb{R}\to \mathbb{R}$, defined by
\begin{equation}
\varphi(x,t) =
\begin{cases}
a(x,|t|)t & \text{ for } t \ne 0, \\
0, & \text{ for } t = 0,
\end{cases}
\end{equation}
and for all $x\in \Omega$, $\varphi(x,\cdot):\mathbb{R}\to \mathbb{R}$ is an odd, increasing homeomorphism. For the function $\varphi$ above, if we define
\begin{equation}\label{e2.2}
\Phi(x,t) = \int_0^t \varphi(x,s)ds, \,\, \forall x \in \Omega, \, t\geq 0
\end{equation}
then the function $\Phi:\Omega\times [0,+\infty) \rightarrow [0,+\infty)$ is called a \textit{generalized} $N$-\textit{function} if it satisfies the following conditions (see e.g., \cite{Adams,Musielak,RenRao}):
\begin{itemize}
\item [$(\Phi_{0})$]for almost all $x\in\Omega$, $\Phi(x,\cdot)$ is a $N$-\textit{function}, i.e., convex,  nondecreasing and continuous function of $t$ such that, $\Phi(x,0)=0$, $\Phi(x,t)>0$ for all $t>0$, and
      \begin{equation*}
        \lim_{t\rightarrow 0}\frac{\Phi(x,t)}{t}=0, \qquad \lim_{t\rightarrow\infty}\frac{\Phi(x,t)}{t}=+\infty
      \end{equation*}
\item [$(\Phi_{1})$] $\Phi(\cdot,t)$ is a measurable function  on $\Omega$ for all $t\geq0$.
\end{itemize}
The set of all generalized $N$-functions is denoted by $N(\Omega)$.
The function $\bar{\Phi}$ defined by
\begin{equation}
\bar{\Phi}(x,t) = \int_0^t \varphi^{-1}(x,s)ds, \,\, \forall x \in \Omega, t\geq 0
\end{equation}
is called the \textit{complementary (or conjugate) function} to $\Phi$, where $\bar{\Phi}$ satisfies the following
\begin{equation*}
\bar{\Phi}(x,t) = \sup_{s>0}\{st-\Phi(x,s):~ s\in \mathbb{R}\}, ~\forall x\in \Omega, t \geq 0
\end{equation*}

It is well known that $\bar{\Phi}\in N(\Omega)$, and then the following Young inequality holds
\begin{equation}
  st\leq \Phi(x,t)+\bar{\Phi}(x,s) ~ \text{for}~ x\in \Omega ~\text{and}~ t,s\in \mathbb{R}.
\end{equation}

The function $\Phi$ allow us to define the \textit{Musielak-Sobolev spaces}, also called \textit{the generalized Orlicz spaces}, by
\begin{equation*}
  L^{\Phi}(\Omega)=\{u:\Omega\rightarrow\mathbb{R}~\text{is measurable}; \exists\lambda>0 ~\text{such that} ~ \int_{\Omega}\Phi(x,|u(x)|/\lambda)dx<+\infty\}.
\end{equation*}
Moreover, by $\Delta_{2}$-\textit{condition} (see below), $L^{\bar{\Phi}}(\Omega)$ is the dual space of $L^{\Phi}(\Omega)$, i.e., $(L^{\Phi}(\Omega))^{*}=L^{\bar{\Phi}}(\Omega)$.\\
In the sequel, we also use the following assumptions for $\Phi$:
\begin{equation}
1< \varphi_0 := \inf_{t>0}\frac{t\varphi(x,t)}{\Phi(x,t)} \leq \frac{t\varphi(x,t)}{\Phi(x,t)} \leq \varphi^0 := \sup_{t>0}\frac{t\varphi(x,t)}{\Phi(x,t)} < \infty,  ~\forall x\in \Omega, t \geq 0
\end{equation}
\begin{equation}
\einf_{x\in\Omega}\Phi(x,t)>0, ~\forall t> 0
\end{equation}
\begin{equation}
\text{the function}\,\,\, t\rightarrow \Phi(x,\sqrt{t})\,\,\, \text{is convex, } ~\forall x\in \Omega, t \geq 0
\end{equation}
By help of assumption (2.5), the Musielak-Sobolev spaces coincides the equivalence classes of measurable functions $u:\Omega\rightarrow \mathbb{R}$ such that
\begin{equation}\label{e2.4}
\int_{\Omega}\Phi(x,|u(x)|)dx < \infty
\end{equation}
and is equipped with the Luxembourg norm
\begin{equation}\label{e2.5}
|u|_\Phi : = \inf\left\{\mu>0 : ~ \int_\Omega \Phi(x,|u(x)|/\lambda)dx\leq 1\right\}
\end{equation}
For the Musielak-Orlicz spaces, H\"older inequality reads as follows (see \cite{Adams},\cite{RenRao})
\begin{equation*}
\int_\Omega uv\,dx \leq 2 \|u\|_{L^\Phi(\Omega)}\|v\|_{L^{\bar{\Phi}}(\Omega)}
\quad \text{ for all } u \in L^\Phi(\Omega) \text{ and } v \in L^{\bar{\Phi}}(\Omega)
\end{equation*}
The {\em Musielak-Sobolev spaces} $W^{1,\Phi}(\Omega)$ is the space defined by
\begin{equation*}
W^{1,\Phi}(\Omega):= \left\{u \in L^\Phi(\Omega):~ \frac{\partial u}{\partial
x_i} \in L^\Phi (\Omega), ~ i = 1, 2,..., N\right\}
\end{equation*}
under the norm
\begin{equation}\label{e2.6}
\|u\|_{1,\Phi} : = |u|_\Phi +|\nabla u|_\Phi
\end{equation}
Now we introduce {\em Musielak-Sobolev spaces with zero boundary traces} $W_0^{1,\Phi}(\Omega)$  as the closure of $C_{0}^{\infty}(\Omega)$ in $W^{1,\Phi}(\Omega)$ under the norm $\|u\|_{1,\Phi}$. Moreover, by help of the well-known \textit{Poincar\'e inequality}, we can define an equivalent norm $\|\cdot\|_{\Phi}$ on $W_0^{1,\Phi}(\Omega)$ by
\begin{equation}
\|u\|_{\Phi} : = |\nabla u|_\Phi
\end{equation}
\begin{remark}
\begin{enumerate}
\item For the case $\Phi(x,t):=\Phi(t)$, we obtain $L^\Phi(\Omega)$ and $W^{1,\Phi}(\Omega)$ called \textit{Orlicz spaces} and \textit{Orlicz-Sobolev spaces}, respectively (see \cite{Krasnosels'kii,Musielak,RenRao}).
\item For the case $\Phi(x,t):=|t|^{p(x)}$, where $p(x)$ is a continuous function on $\overline{\Omega}$ with $p(x)>1$, we replace $L^\Phi(\Omega)$ by $L^{p(x)}(\Omega)$ and $W^{1,\Phi}(\Omega)$ by $W^{1,p(x)}(\Omega)$ and call them \textit{variable exponent Lebesgue spaces} and \textit{variable exponent Sobolev spaces}, respectively (see \cite{Adams,DCU,Diening}).
\end{enumerate}
\end{remark}
\begin{proposition}[\protect\cite{Adams}] If (2.5)-(2.7) hold then the spaces $L^\Phi(\Omega)$ and $W^{1,\Phi}(\Omega)$ are separable and reflexive Banach spaces.
\end{proposition}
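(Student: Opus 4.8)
The plan is to deduce all three properties — completeness, separability, and reflexivity — from the single structural fact that both $\Phi$ and its conjugate $\bar\Phi$ satisfy the $\Delta_2$-condition, which I will extract from the growth bounds (2.5). First I would rewrite the two-sided estimate $\varphi_0 \le t\varphi(x,t)/\Phi(x,t) \le \varphi^0$ as a differential inequality for $\ln\Phi(x,t)$: since $\Phi(x,t)=\int_0^t\varphi(x,s)\,ds$ gives $\partial_t\ln\Phi(x,t)=\varphi(x,t)/\Phi(x,t)$, the hypothesis reads $\varphi_0/t \le \partial_t\ln\Phi(x,t)\le\varphi^0/t$. Integrating from $t$ to $2t$ yields the pointwise comparison $2^{\varphi_0}\Phi(x,t)\le\Phi(x,2t)\le 2^{\varphi^0}\Phi(x,t)$, uniformly in $x$. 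The right-hand inequality is exactly $\Phi\in\Delta_2$; the left-hand inequality, because $\varphi_0>1$, says $\Phi\in\nabla_2$, which by the standard conjugacy correspondence (growth of $\Phi$ from below controls growth of $\bar\Phi$ from above) is equivalent to $\bar\Phi\in\Delta_2$. Thus both $\Phi$ and $\bar\Phi$ are $\Delta_2$.

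With the $\Delta_2$-condition in hand, completeness and separability are routine. For completeness I would argue as in the classical Orlicz setting: the Luxembourg functional in (2.9) is a norm by convexity of $\Phi(x,\cdot)$, and a Cauchy sequence in $L^\Phi(\Omega)$ is Cauchy in measure, so a subsequence converges a.e.; a Fatou-type passage to the limit inside $\int_\Omega\Phi(x,\cdot)\,dx$, together with (2.6) to rule out degeneracy, shows the limit lies in $L^\Phi(\Omega)$ and realises the norm limit. Completeness of $W^{1,\Phi}(\Omega)$ then follows because the weak-derivative operators $u\mapsto\partial u/\partial x_i$ are closed and the norm (2.10) is the graph norm built from $N+1$ copies of the complete space $L^\Phi(\Omega)$. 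For separability, the $\Delta_2$-condition guarantees that $\Phi$-simple functions are dense in $L^\Phi(\Omega)$ (there is no singular part on which the modular blows up), and since $\Omega$ carries a countably generated $\sigma$-algebra, a countable dense set is obtained from simple functions with rational coefficients over a generating family; separability of $W^{1,\Phi}(\Omega)$ is inherited through the isometric embedding $u\mapsto(u,\nabla u)$ into the product $\prod_{i=0}^N L^\Phi(\Omega)$.

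The reflexivity is where condition (2.7) earns its place, and I expect this to be the main obstacle. The cleanest route is to show $L^\Phi(\Omega)$ is uniformly convex and then invoke the Milman--Pettis theorem. The hypothesis that $t\mapsto\Phi(x,\sqrt t)$ is convex gives, applying its midpoint convexity with arguments $a^2$ and $b^2$, the pointwise inequality
\begin{equation*}
\Phi\!\left(x,\sqrt{\tfrac{a^2+b^2}{2}}\,\right)\le\tfrac12\bigl(\Phi(x,a)+\Phi(x,b)\bigr),\qquad a,b\ge 0,
\end{equation*}
which is the seed of a Clarkson-type estimate for the modular $\rho(u):=\int_\Omega\Phi(x,|u|)\,dx$. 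Integrating this over $\Omega$ and combining it with a strict-convexity gain when $a$ and $b$ are far apart produces, for $|u|_\Phi=|v|_\Phi=1$ with $|u-v|_\Phi\ge\varepsilon$, a uniform bound $\rho\bigl((u+v)/2\bigr)\le 1-\delta(\varepsilon)$; the $\Delta_2$-control from (2.5) then upgrades this modular estimate to the genuine norm inequality $|(u+v)/2|_\Phi\le 1-\delta'(\varepsilon)$, i.e.\ uniform convexity. The delicate point — and the reason all three hypotheses must be used in tandem — is this modular-to-norm passage: in the Musielak setting the modular and the Luxembourg norm are not comparable by any fixed power, so one must lean on $\Phi\in\Delta_2$ to secure the quantitative equivalence $\rho(u)\to 0\iff|u|_\Phi\to 0$. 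Once $L^\Phi(\Omega)$ is uniformly convex it is reflexive by Milman--Pettis, and $W^{1,\Phi}(\Omega)$ inherits reflexivity as a closed subspace of the reflexive product under $u\mapsto(u,\nabla u)$. As an alternative avoiding uniform convexity, reflexivity also follows from the double duality $(L^\Phi)^{**}=L^\Phi$, using $\bar\Phi\in\Delta_2$ to identify $(L^\Phi)^*=L^{\bar\Phi}$ and $\Phi\in\Delta_2$ to identify $(L^{\bar\Phi})^*=L^\Phi$, with the Young inequality (2.4) and the recorded Hölder inequality furnishing the isometric pairing.
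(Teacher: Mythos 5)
The paper offers no proof of this proposition at all: it is stated as an import from \cite{Adams} (where, strictly speaking, only the $x$-independent Orlicz--Sobolev case is treated; the Musielak case is in Musielak's monograph and in \cite{XLFan1}). So your argument can only be measured against the standard literature proofs, and on that score it follows the canonical route and is essentially correct. Your integration of $\varphi_0/t\leq \partial_t \ln\Phi(x,t)\leq \varphi^0/t$ is literally the computation the author performs later in Remark 2.5 to obtain (2.14); it gives $\Phi\in\Delta_2$ and, because $\varphi_0>1$, $\Phi\in\nabla_2$, hence $\bar\Phi\in\Delta_2$ --- this is the correct engine for the whole proposition. Completeness via Cauchy-in-measure plus a Fatou passage, with (2.6) used exactly to prevent degeneracy in $x$, is the standard argument, and both of your reflexivity routes (uniform convexity of the modular from convexity of $t\mapsto\Phi(x,\sqrt t)$, upgraded to uniform convexity of the Luxembourg norm via $\Delta_2$ and finished by Milman--Pettis; or the double duality $(L^\Phi)^{**}=L^\Phi$ from $\Phi,\bar\Phi\in\Delta_2$) are precisely the arguments of \cite{MihaRadu} and \cite{XLFan1}. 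One side remark is off: your claim that in the Musielak setting the modular and the norm ``are not comparable by any fixed power'' contradicts Proposition 2.3(i)--(ii) of this very paper, which under (2.5) gives exactly the power comparability $\|u\|_\Phi^{\varphi^0}\leq\rho(u)\leq\|u\|_\Phi^{\varphi_0}$ for $\|u\|_\Phi<1$; this error is harmless, since it only means your modular-to-norm passage is easier than you feared.

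There is one substantive gap, in the separability step. The $\Delta_2$-condition controls the growth of $\Phi$ in $t$, not in $x$: under (2.5)--(2.7) alone, $\Phi(\cdot,1)$ need not be locally integrable on $\Omega$, so a characteristic function $\chi_E$ of a cube need not belong to $L^\Phi(\Omega)$, and your countable family of simple functions with rational coefficients over a generating family of sets may lie entirely outside the space. Your parenthetical ``there is no singular part on which the modular blows up'' is exactly the point that $\Delta_2$ does \emph{not} deliver. The fix is routine: set $\Omega_k=\{x\in\Omega:\Phi(x,1)\leq k\}$, measurable by $(\Phi_1)$ and increasing to $\Omega$ up to a null set; by (2.14) one has $\Phi(x,t)\leq\max\{t^{\varphi_0},t^{\varphi^0}\}\,\Phi(x,1)$, so $\chi_{E\cap\Omega_k}\in L^\Phi(\Omega)$ for every measurable $E$, and for $u\in L^\Phi(\Omega)$ dominated convergence gives $\rho\bigl((u-u\chi_{\Omega_k})/\lambda\bigr)\to 0$, which $\Delta_2$ upgrades to norm convergence; then approximate on each $\Omega_k$ by simple functions dominated by $|u|$. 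Alternatively, one can invoke the paper's condition $(\Phi_3)$, which is introduced immediately after this proposition and yields the needed local integrability directly --- indeed the proposition as stated in the paper arguably needs this hypothesis as well, so the gap is one you share with the source rather than one you introduced.
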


\begin{proposition}[\protect\cite{XLFan1,MihaRadu}]
\label{pro2.1} Let define the modular $\rho(u):=\int_{\Omega}\Phi(x,|\nabla u|)dx:W_0^{1,\Phi}(\Omega)\rightarrow \mathbb{R}$. Then for every $u_{n},u\in W_0^{1,\varphi}(\Omega)$, we have

\begin{itemize}
\item[$(i)$] $\|u\|_{\Phi}^{\varphi^0} \leq \rho(u)\leq \|u\|_{\Phi}^{\varphi_0}$ \, if \, $\|u\|_{\Phi}<1$
\item[$(ii)$] $\|u\|_{\Phi}^{\varphi_0} \leq \rho(u)\leq \|u\|_{\Phi}^{\varphi^0}$ \, if \, $\|u\|_{\Phi}>1$
\item[$(iii)$] $\|u\|_{\Phi} \leq \rho(u)+1$
\item[$(iv)$] $\|u_{n}-u\|_{\Phi}\rightarrow 0 \Leftrightarrow \rho(u_{n}-u)\rightarrow 0$
\item[$(v)$] $\|u_{n}-u\|_{\Phi}\rightarrow \infty \Leftrightarrow \rho(u_{n}-u)\rightarrow \infty$
\end{itemize}
\end{proposition}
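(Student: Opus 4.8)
The plan is to reduce everything to a single pair of pointwise scaling inequalities for $\Phi$ that follow from hypothesis (2.5), and then to exploit the defining property of the Luxembourg norm. First I would note that, since $\Phi(x,t)=\int_0^t\varphi(x,s)\,ds$ gives $\partial_t\Phi(x,t)=\varphi(x,t)$, assumption (2.5) can be rewritten as $\varphi_0/t\le \partial_t\Phi(x,t)/\Phi(x,t)\le\varphi^0/t$ for $t>0$. Integrating $\frac{d}{dt}\ln\Phi(x,t)$ from $t$ to $\lambda t$ then yields, for every $x\in\Omega$, $t>0$, and $\lambda\ge 1$,
\[
\lambda^{\varphi_0}\Phi(x,t)\le \Phi(x,\lambda t)\le \lambda^{\varphi^0}\Phi(x,t),
\]
with the two exponents interchanged when $0<\lambda\le 1$. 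These are the only structural facts about $\Phi$ that the argument will need.

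The second ingredient is that, for $u\neq 0$, the map $\mu\mapsto\int_\Omega\Phi(x,|\nabla u|/\mu)\,dx$ is continuous, strictly decreasing, tends to $+\infty$ as $\mu\to 0^+$, and to $0$ as $\mu\to+\infty$; hence the infimum defining $\|u\|_\Phi$ is attained at the unique value where this integral equals one, i.e. $\rho(u/\|u\|_\Phi)=1$. I expect this normalization step to be the main obstacle, since it rests on the continuity and the two limiting values of the modular, which in turn require the $N$-function conditions $(\Phi_0)$ together with the $\Delta_2$-type growth encoded in $\varphi^0<\infty$ (the inequality $\Phi(x,2t)\le 2^{\varphi^0}\Phi(x,t)$ is exactly the scaling bound at $\lambda=2$, and it is what closes the dominated-convergence step so that the infimum gives equality rather than a strict inequality).

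Granting $\rho(u/\|u\|_\Phi)=1$, parts $(i)$ and $(ii)$ are then immediate. Writing $\mu=\|u\|_\Phi$ and $|\nabla u|=\mu\cdot(|\nabla u|/\mu)$, I apply the scaling inequality with $\lambda=\mu$ to the integrand $\Phi(x,\mu\cdot|\nabla u|/\mu)$ and integrate over $\Omega$; since $\int_\Omega\Phi(x,|\nabla u|/\mu)\,dx=1$, this produces $\mu^{\varphi_0}\le\rho(u)\le\mu^{\varphi^0}$ when $\mu>1$ (part $(ii)$) and $\mu^{\varphi^0}\le\rho(u)\le\mu^{\varphi_0}$ when $\mu<1$ (part $(i)$).

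Finally, $(iii)$ follows by cases: if $\|u\|_\Phi\le 1$ the bound is trivial since $\rho(u)\ge 0$, while if $\|u\|_\Phi>1$ then $\varphi_0>1$ gives $\|u\|_\Phi\le\|u\|_\Phi^{\varphi_0}\le\rho(u)\le\rho(u)+1$ by $(ii)$. For the convergence statements I apply $(i)$ and $(ii)$ to $v_n:=u_n-u$: if $\|v_n\|_\Phi\to 0$ then eventually $\|v_n\|_\Phi<1$ and $\rho(v_n)\le\|v_n\|_\Phi^{\varphi_0}\to 0$, and conversely $\rho(v_n)\to 0$ forces $\|v_n\|_\Phi<1$ eventually (otherwise $\rho(v_n)\ge\|v_n\|_\Phi^{\varphi_0}\ge 1$), whence $\|v_n\|_\Phi^{\varphi^0}\le\rho(v_n)\to 0$; this proves $(iv)$. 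The same dichotomy with the roles of small and large norm reversed, now using $(ii)$, proves $(v)$.
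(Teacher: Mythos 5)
Your proof is correct, but there is no in-paper argument to compare it against step by step: the paper states this proposition with an attribution to \cite{XLFan1,MihaRadu} and gives no proof of it. Your two ingredients are exactly the standard ones from those references, and the first of them --- the scaling inequality $\lambda^{\varphi_0}\Phi(x,t)\le\Phi(x,\lambda t)\le\lambda^{\varphi^0}\Phi(x,t)$ for $\lambda\ge 1$, with exponents interchanged for $0<\lambda\le 1$ --- is in fact derived in the paper by the same logarithmic-derivative integration of (2.5), in Remark 2.5 (inequality (2.14)) and again, in the small-parameter form $\Phi(x,ts)\le t^{\varphi_0}\Phi(x,s)$, inside the proof of Lemma 3.3; so your argument is fully consistent with the machinery the paper actually deploys elsewhere. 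Two small points worth tightening. First, you correctly flag the normalization $\rho(u/\|u\|_\Phi)=1$ as the crux: it needs $u\ne 0$ (so that $\rho(u)>0$, via $\Phi(x,t)>0$ for $t>0$ from $(\Phi_0)$), strict monotonicity of $\Phi(x,\cdot)$ on the set where $|\nabla u|>0$, and continuity of $\mu\mapsto\int_\Omega\Phi(x,|\nabla u|/\mu)\,dx$, which is where $\varphi^0<\infty$ (equivalently the $\Delta_2$-condition (2.12)) enters, exactly as you say. Second, in (iv)--(v) your dichotomy silently omits the boundary case $\|u_n-u\|_\Phi=1$, to which neither (i) nor (ii) applies; but there the same normalization gives $\rho(u_n-u)=1$ directly, so the contradiction closes in every case. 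With these remarks your proof is complete and self-contained --- which is more than the paper itself provides for this statement.
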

We want to remark that Proposition 2.3 $(iv)-(v)$ mean that norm and modular topology coincide on $L^{\Phi }(\Omega )$ provided $\Phi$ satisfies (2.5), which enables that well-known $\Delta_{2}$-\textit{condition} holds, i.e.,
\begin{equation}
\Phi(x,2t)\leq M\Phi(x,t), \,\,\, \text{for all}\,\,\, x\in \Omega, t \geq 0
\end{equation}
where $M$ is a positive constant (see \cite{MihaRadu}).\\
Furthermore, if $\Psi,\Phi \in N(\Omega)$ and
\begin{equation}
  \Psi(x,t)\leq k_{1}\Phi(x,k_{2}t)+h(x), \,\,\, \text{for all}\,\,\, x\in \Omega, t \geq 0
\end{equation}
holds, where $h \in L^{1}(\Omega)$ with $h(x)\geq 0$ a.e. $x\in \Omega$, $k_{1},k_{2}$ are positive constants, then we have the following continuous embeddings (see \cite{Musielak}):
\begin{itemize}
  \item [$(i)$] $L^{\Phi }(\Omega )\hookrightarrow L^{\Psi }(\Omega )$
  \item [$(ii)$] $W^{1,\Phi }(\Omega )\hookrightarrow W^{1,\Psi }(\Omega )$
\end{itemize}

We also assume that the following condition hold for function $\Phi$.\\
For every $t > 0$ there exists a constant $C_{t} > 0$ such that
\begin{itemize}
  \item [$(\Phi_{3})$] $C_{t}\leq \Phi(x,t) \leq C^{-1}_{t}$
\end{itemize}
for a.e. $x\in \Omega$.
\begin{proposition}[\protect\cite{Fan-D}] Assume that $\Omega$ is a bounded domain with smooth boundary $\partial\Omega$. Then the embedding $W^{1,p(x)}(\Omega)\hookrightarrow L^{r(x)}(\Omega)$ is compact provided $r, p \in C(\overline{\Omega})$ such that $p^{-}>1$, $1\leq r(x)<p^{*}(x)$, where $p^{*}(x):=\frac{Np(x)}{N-p(x)}$ if $p(x)<N$ and $p^{*}(x):=+\infty$ if $p(x)\geq N$.
\end{proposition}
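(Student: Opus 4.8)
The plan is to reduce the variable-exponent statement to the classical Rellich--Kondrachov theorem by a localization-and-covering argument, exploiting the fact that $p$ and $r$ are continuous on the compact set $\overline{\Omega}$. First I would fix an arbitrary point $x_{0}\in\overline{\Omega}$ and use continuity to produce a ball $B=B(x_{0},\delta)$ on which the exponents are nearly constant. Precisely, setting $p_{B}^{-}=\min_{\overline{B}\cap\overline{\Omega}}p$ and $r_{B}^{+}=\max_{\overline{B}\cap\overline{\Omega}}r$, the relation $r(x)<p^{*}(x)$ together with the continuity of $x\mapsto p^{*}(x)$ guarantees $r_{B}^{+}<(p_{B}^{-})^{*}$ once $\delta$ is small enough; when $p(x_{0})\geq N$ the target exponent $p^{*}(x_{0})=+\infty$ and the strict inequality is automatic. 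This is the place where the strictness $r(x)<p^{*}(x)$ is indispensable.

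On each such ball I would collapse the variable exponents to constants. Since $B\cap\Omega$ has finite measure and $p(x)\geq p_{B}^{-}$, the inclusion $L^{p(x)}(B\cap\Omega)\hookrightarrow L^{p_{B}^{-}}(B\cap\Omega)$ is continuous, and the same holds for the partial derivatives, so $W^{1,p(x)}(B\cap\Omega)\hookrightarrow W^{1,p_{B}^{-}}(B\cap\Omega)$ continuously. Because $\partial\Omega$ is smooth, $B\cap\Omega$ satisfies the cone (Lipschitz) condition, and the classical Rellich--Kondrachov theorem yields the compact embedding $W^{1,p_{B}^{-}}(B\cap\Omega)\hookrightarrow\hookrightarrow L^{r_{B}^{+}}(B\cap\Omega)$, valid precisely because $r_{B}^{+}<(p_{B}^{-})^{*}$. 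Finally $r(x)\leq r_{B}^{+}$ and $|B\cap\Omega|<\infty$ give the continuous inclusion $L^{r_{B}^{+}}(B\cap\Omega)\hookrightarrow L^{r(x)}(B\cap\Omega)$. Composing the three embeddings produces a compact embedding $W^{1,p(x)}(B\cap\Omega)\hookrightarrow\hookrightarrow L^{r(x)}(B\cap\Omega)$ on each ball.

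To globalize, I would cover the compact set $\overline{\Omega}$ by finitely many such balls $B_{1},\dots,B_{m}$. Given a bounded sequence $\{u_{n}\}\subset W^{1,p(x)}(\Omega)$, its restriction to each $B_{i}\cap\Omega$ is bounded in $W^{1,p(x)}(B_{i}\cap\Omega)$, so by the local compactness I may extract, successively over $i=1,\dots,m$ and by a diagonal procedure, a single subsequence converging in $L^{r(x)}(B_{i}\cap\Omega)$ for every $i$. Since $\Omega\subset\bigcup_{i}B_{i}$, the modular $\int_{\Omega}|u|^{r(x)}\,dx$ over $\Omega$ is dominated by the sum of the corresponding modulars over the pieces $B_{i}\cap\Omega$, so the subsequence is modular-Cauchy, hence norm-Cauchy, in $L^{r(x)}(\Omega)$, which is complete; therefore it converges there. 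This establishes compactness of $W^{1,p(x)}(\Omega)\hookrightarrow L^{r(x)}(\Omega)$.

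The main obstacle I anticipate is bookkeeping rather than deep analysis: the classical Rellich--Kondrachov theorem does the heavy lifting, so the delicate points are (i) extracting the uniform local gap $r_{B}^{+}<(p_{B}^{-})^{*}$ while correctly handling the region where $p(x)\geq N$ (so that $p^{*}=+\infty$), and (ii) patching the finitely many locally convergent subsequences into one global convergent subsequence without losing control near $\partial\Omega$; the smoothness of the boundary is what lets each $B_{i}\cap\Omega$ inherit the regularity needed for the local classical embedding. An alternative route avoiding the covering would first establish the continuous critical embedding $W^{1,p(x)}(\Omega)\hookrightarrow L^{p^{*}(x)}(\Omega)$ and then combine an almost-everywhere convergent subsequence (from compactness into a fixed $L^{s}$) with the uniform integrability of $\{|u_{n}|^{r(x)}\}$ via Vitali's theorem; but that path demands the sharp variable-exponent Sobolev inequality, which needs more than the bare continuity of $p$, so I would prefer the localization argument.
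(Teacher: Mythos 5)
The paper never proves this proposition: it is imported verbatim from Fan and Zhao \cite{Fan-D}, so there is no internal argument to compare against. Your localization-and-covering proof is correct in outline and is, in essence, the standard proof from the cited literature: freeze the exponents on small neighborhoods using uniform continuity and the strict gap $r(x)<p^{*}(x)$ (with the case $p(x_{0})\geq N$ handled exactly as you describe, since $(p_{B}^{-})^{*}\rightarrow\infty$ or $p_{B}^{-}>N$ as the radius shrinks), sandwich the variable-exponent spaces between constant-exponent ones on each piece via the finite-measure inclusions $L^{q(x)}\hookrightarrow L^{s(x)}$ for $s(x)\leq q(x)$, invoke classical Rellich--Kondrachov locally, and patch by a finite cover and diagonal extraction. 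Two points deserve tightening. First, $B\cap\Omega$ need not be Lipschitz for an arbitrary ball $B$ meeting $\partial\Omega$: where the sphere is tangent to the boundary the intersection can have a cusp, so the classical compact embedding is not automatic; the standard fix is to use the smooth-boundary local charts to replace balls by graph cylinders (or small cubes adapted to the local graph representation), on which the intersection is manifestly Lipschitz. Second, in the globalization step you should note explicitly that the local limits agree a.e.\ on overlaps and hence define a single function $v$, and that the passage from modular convergence to norm convergence in $L^{r(x)}(\Omega)$ uses $r^{+}<\infty$ (guaranteed by continuity of $r$ on the compact $\overline{\Omega}$), which is what yields the $\Delta_{2}$-condition making modular and norm topologies coincide. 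With these repairs the argument is complete, and it is more self-contained than the paper, which simply cites the result.
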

\begin{remark} First, we note that for $t>1$ and $s>0$ it holds $t^{\varphi_{0}}\Phi(x,s)\leq\Phi(x,ts)\leq t^{\varphi^{0}}\Phi(x,s)$. Indeed, from the assumption (2.5), we have
\[
\varphi_{0}\leq\frac{z\varphi(x,z)}{\Phi(x,z)}\leq\varphi^{0}, \,\,\, \forall x\in \Omega, z \geq 0
\]
Considering that for almost all $x\in\Omega$, $\Phi(x,z)$ is a convex, nondecreasing and continuous function of $z$, we can proceed as follows
\[
\int^{ts}_{s}\frac{\varphi_{0}}{z}dz\leq\int^{ts}_{s} \frac{\varphi(x,z)}{\Phi(x,z)}\leq\int^{ts}_{s}\frac{\varphi^{0}}{z}dz
\]
and hence
\begin{equation}
t^{\varphi_{0}}\Phi(x,s)\leq\Phi(x,ts)\leq t^{\varphi^{0}}\Phi(x,s)
\end{equation}
Now, if we consider $(\Phi_{3})$ and the inequality (2.14) together, we can obtain
\begin{equation}
C_{t}t^{\varphi_{0}}\leq \Phi(x,st)+C
\end{equation}
Hence, if we consider (2.15) along with (2.13) where $\frac{1}{k_{1}}=C_{t}$, $k_{2}=s$ and $h(x)=C=const\geq0$, the Musielak-Sobolev space $W^{1,\Phi}(\Omega)$ is continuously embedded in the variable Sobolev space $W^{1,\varphi_0 }(\Omega)$. On the other hand, $W^{1,\varphi_0 }(\Omega)$ is compactly embedded in the variable Lebesgue space $L^{r(x)}(\Omega)$ for all $1\leq r(x)<\varphi_{0}^{*}:=\frac{N\varphi_{0}}{N-\varphi_{0}}$ with $r\in C(\overline{\Omega })$. As a result, $W^{1,\Phi}(\Omega)$ is continuously and compactly embedded in the variable Lebesgue space $L^{r(x)}(\Omega)$.
\end{remark}
\begin{remark}
The functional $\rho$ is from $C^{1}(W_{0}^{1,\Phi}(\Omega),\mathbb{R})$ with the derivative
$$
\langle\rho^{\prime}(u),v\rangle=\int_{\Omega}a(x,|\nabla u|)\nabla u\cdot\nabla vdx
$$
where $\langle \cdot, \cdot\rangle$ is the dual pairing between $W_{0}^{1,\Phi}(\Omega)$ and its dual $(W_{0}^{1,\Phi}(\Omega))^{*}$(see \cite{MihaRadu}).
\end{remark}

The following Proposition generalizes the definition of convexity, and therefore, we give a proof for the convenience.

\begin{proposition}
Let $X$ be a vector space and let $I:X \rightarrow \mathbb{R}$. Then $I$ is convex if and only if
\begin{equation}
I((1-\lambda)u+\lambda v)< (1-\lambda)\theta+\lambda \beta, \,\,\,\, 0<\lambda<1
\end{equation}
whenever $I(u)<\theta$ and $I(v)<\beta$, for all $u,v \in X$ and $\theta,\beta \in \mathbb{R}$.
\end{proposition}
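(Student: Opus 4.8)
The plan is to prove both implications directly from the defining inequality for convexity, namely
\[
I((1-\lambda)u+\lambda v)\leq (1-\lambda)I(u)+\lambda I(v)\qquad\text{for all } u,v\in X,\ \lambda\in[0,1].
\]
The proposition is really the ``epigraph'' reformulation of this inequality, so the work amounts to translating between the two formulations; I expect no genuine obstacle, only one limiting argument that must be handled carefully.

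For the forward direction I would assume $I$ is convex and fix $u,v\in X$ and $\theta,\beta\in\mathbb{R}$ with $I(u)<\theta$ and $I(v)<\beta$. For $0<\lambda<1$ both weights $1-\lambda$ and $\lambda$ are \emph{strictly} positive, so multiplying the strict inequalities $I(u)<\theta$ and $I(v)<\beta$ by $1-\lambda$ and $\lambda$ respectively and adding gives $(1-\lambda)I(u)+\lambda I(v)<(1-\lambda)\theta+\lambda\beta$. Chaining this with the convexity inequality yields $I((1-\lambda)u+\lambda v)\leq(1-\lambda)I(u)+\lambda I(v)<(1-\lambda)\theta+\lambda\beta$, which is exactly (2.16). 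The key point is that strictness of the conclusion comes for free precisely because $\lambda\in(0,1)$ forces both coefficients to be positive.

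For the converse I would fix $u,v\in X$ and $\lambda\in(0,1)$, and for an arbitrary $\varepsilon>0$ apply the hypothesis with the slack choices $\theta=I(u)+\varepsilon$ and $\beta=I(v)+\varepsilon$; these trivially satisfy $I(u)<\theta$ and $I(v)<\beta$. The hypothesis then gives $I((1-\lambda)u+\lambda v)<(1-\lambda)I(u)+\lambda I(v)+\varepsilon$. Since $\varepsilon>0$ is arbitrary, letting $\varepsilon\downarrow 0$ recovers the non-strict convexity inequality for this $\lambda$, while the endpoint cases $\lambda\in\{0,1\}$ hold with equality. Thus $I$ is convex. The only delicate step is this passage to the limit in $\varepsilon$: it is what reconciles the strict inequality appearing in (2.16) with the non-strict inequality in the definition of convexity, and without introducing the $\varepsilon$-slack one could not invoke the hypothesis at all.
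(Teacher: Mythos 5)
Your proof is correct and follows essentially the same route as the paper: the forward direction chains the convexity inequality with the strict inequalities weighted by the positive coefficients $1-\lambda$ and $\lambda$, and the converse uses exactly the paper's $\varepsilon$-slack choices $\theta=I(u)+\varepsilon$, $\beta=I(v)+\varepsilon$ followed by letting $\varepsilon\downarrow 0$. If anything, your write-up is slightly more careful than the paper's, which writes the convexity step itself as a strict inequality; your use of $\leq$ there, and your remark on the endpoint cases $\lambda\in\{0,1\}$, are the correct refinements.
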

\begin{proof} Assume that functional $I:X \rightarrow \mathbb{R}$ is convex. Moreover, since $I$ is a real-valued functional, there are real numbers $\theta,\beta \in \mathbb{R}$ such that $I(u)<\theta$ and $I(v)<\beta$. Then
\begin{equation*}
I((1-\lambda)u+\lambda v)<(1-\lambda)I(u)+\lambda I(v) <(1-\lambda)\theta+\lambda \beta, \,\,\,\, 0<\lambda<1.
\end{equation*}
On the other hand, assume that (2.16) holds. Since $I(u)<\theta$ and $I(v)<\beta$, we can write, for all $\varepsilon >0$,
$$
I(u)<I(u)+\varepsilon:=\theta
$$
$$
I(v)<I(v)+\varepsilon:=\beta
$$
Therefore,
\begin{equation}
I((1-\lambda)u+\lambda v)<(1-\lambda)I(u)+\lambda I(v)+\varepsilon, \,\,\,\, 0<\lambda<1
\end{equation}
If we consider that (2.17) holds for any $\varepsilon >0$, we conclude
\begin{equation*}
I((1-\lambda)u+\lambda v)\leq(1-\lambda)I(u)+\lambda I(v)
\end{equation*}
\end{proof}
\section{The main results}
\begin{theorem} Suppose that the following assumptions hold:

\begin{itemize}

\item [(G0)] $g(x) \in C^{1}(\overline{\Omega})$ is a nontrivial nonnegative function.

\item [(A0)] $\mathcal{A}:(0,\infty)\rightarrow (0,\infty)$ is a continuous function and satisfies the growth condition
\begin{equation*}
m_{1}t^{\alpha-1}\leq\mathcal{A}(t) \leq m_{2}t^{\alpha-1}
\end{equation*}
where $m_{1},m_{2},\alpha$ are real numbers such that $m_{2} \geq m_{1}>1$ and $\alpha >1$.
\end{itemize}
Then problem \ref{e1.1} has a positive ground state solution in $W_0^{1,\Phi}(\Omega)$ with a negative energy level.
\end{theorem}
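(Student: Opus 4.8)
The plan is to realize the solution as a global minimizer of the energy functional attached to \eqref{e1.1}. Writing $\hat{\mathcal{A}}(t):=\int_0^t\mathcal{A}(s)\,ds$ and noting that the primitive of $s\mapsto g(x)s^{-\gamma(x)}$ is $\frac{g(x)}{1-\gamma(x)}s^{1-\gamma(x)}$ (well defined since $1-\gamma(x)\in(0,1)$), the natural functional is
\[
J(u)=\hat{\mathcal{A}}\!\left(\int_\Omega\Phi(x,|\nabla u|)\,dx\right)-\int_\Omega\frac{g(x)}{1-\gamma(x)}\,|u|^{1-\gamma(x)}\,dx,\qquad u\in W_0^{1,\Phi}(\Omega).
\]
Because of the singular reaction term, $J$ is continuous but fails to be Gâteaux differentiable at functions vanishing on a set of positive measure, so smooth critical point theory is unavailable. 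I would therefore use the direct method of the calculus of variations to produce a minimizer and then argue separately that it solves the equation.

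First I would establish coercivity and lower boundedness. By (A0) one has $\hat{\mathcal{A}}(t)\ge\frac{m_1}{\alpha}t^{\alpha}$, so with $\rho(u)=\int_\Omega\Phi(x,|\nabla u|)\,dx$ and Proposition~\ref{pro2.1}(ii) we get $\hat{\mathcal{A}}(\rho(u))\ge\frac{m_1}{\alpha}\|u\|_\Phi^{\alpha\varphi_0}$ for $\|u\|_\Phi>1$. Since $1-\gamma(x)<1$, the pointwise bound $|u|^{1-\gamma(x)}\le 1+|u|$ and the continuous embedding into $L^1(\Omega)$ provided by the embedding Remark control the singular term by $C(1+\|u\|_\Phi)$; as $\alpha\varphi_0>1$ the elastic term dominates, giving coercivity and a finite infimum. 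For weak lower semicontinuity, $u\mapsto\rho(u)$ is convex (because $\Phi(x,\cdot)$ is convex) and strongly continuous, hence weakly l.s.c., and composing with the nondecreasing continuous $\hat{\mathcal{A}}$ preserves this; the singular term is in fact weakly continuous, since the compact embedding $W_0^{1,\Phi}(\Omega)\hookrightarrow\hookrightarrow L^{r(x)}(\Omega)$ of the Remark yields strong $L^{r(x)}$ and a.e.\ convergence along weakly convergent sequences, so dominated convergence applies. Thus $J$ attains its infimum at some $u_0\in W_0^{1,\Phi}(\Omega)$, and since $J(|u_0|)=J(u_0)$ I may take $u_0\ge0$.

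To guarantee nontriviality with negative energy, I would test with $t\phi_0$, where $\phi_0\ge0$ is fixed with $\int_\Omega g\,\phi_0^{1-\gamma}\,dx>0$ (available because $g$ is nontrivial and nonnegative). For small $t\in(0,1)$, Proposition~\ref{pro2.1} gives $\hat{\mathcal{A}}(\rho(t\phi_0))\le Ct^{\alpha\varphi_0}$, while the singular term is bounded below by $c\,t^{1-\gamma^-}$ with $\gamma^-:=\min_{\overline\Omega}\gamma>0$; as $1-\gamma^-<1<\alpha\varphi_0$ we obtain $J(t\phi_0)<0$ for $t$ small. Hence $\inf J<0$, so $u_0\ne0$ and $J(u_0)<0$. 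Since $J(u_0)=\inf J$ is the least value among all critical points, $u_0$ will be a ground state once it is shown to solve \eqref{e1.1}.

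The main obstacle is precisely this last step, i.e.\ passing from a minimizer to a positive weak solution in spite of the nondifferentiability. For $\phi\ge0$ I would use $u_0+t\phi$ as a competitor: from $J(u_0+t\phi)\ge J(u_0)$, dividing by $t>0$ and letting $t\downarrow0$, the elastic part converges to $\mathcal{A}(\rho(u_0))\int_\Omega a(x,|\nabla u_0|)\nabla u_0\cdot\nabla\phi\,dx$ (using the derivative formula $\langle\rho'(u),v\rangle=\int_\Omega a(x,|\nabla u|)\nabla u\cdot\nabla v\,dx$ from the Remark and continuity of $\mathcal{A}$), while Fatou's lemma applied to the nonnegative difference quotient of the singular term yields
\[
\mathcal{A}(\rho(u_0))\int_\Omega a(x,|\nabla u_0|)\nabla u_0\cdot\nabla\phi\,dx\ \ge\ \int_\Omega g(x)\,u_0^{-\gamma(x)}\phi\,dx .
\]
This inequality in particular forces $g\,u_0^{-\gamma}\phi\in L^1(\Omega)$, and choosing $\phi$ concentrated where $u_0$ might vanish, together with a strong minimum principle for $\mathrm{div}(a(x,|\nabla u_0|)\nabla u_0)$, gives $u_0>0$ in $\Omega$ with $u_0\ge c_K>0$ on each $K\Subset\Omega$. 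This interior lower bound is the crux: it makes the singular difference quotient dominated on the support of any test function, so that for arbitrary $\phi\in W_0^{1,\Phi}(\Omega)$ the limit as $t\to0$ exists and the reverse inequality also holds. Combining the two inequalities upgrades the relation to the full weak formulation of \eqref{e1.1}, so $u_0$ is a positive weak solution; being a global minimizer with $J(u_0)<0$, it is the desired positive ground state solution.
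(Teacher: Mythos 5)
Up to the positivity of the minimizer, your argument tracks the paper's proof: the same functional $J$, the direct method to produce a nonnegative minimizer $u_0$, the same scaling comparison $t^{\alpha\varphi_0}$ versus $t^{1-\gamma^+}$ (for $t$ small, using $\Phi(x,ts)\leq t^{\varphi_0}\Phi(x,s)$) to force $\inf J<0$ and hence $u_0\neq 0$, the same one-sided difference quotient with Fatou's lemma for $\phi\geq 0$ yielding $\mathcal{A}(\rho(u_0))\int_\Omega a(x,|\nabla u_0|)\nabla u_0\cdot\nabla\phi\,dx\geq\int_\Omega g(x)u_0^{-\gamma(x)}\phi\,dx$, and the strong maximum principle for positivity. (In fact your route to weak lower semicontinuity --- convexity and continuity of $\rho$ composed with the nondecreasing $\widehat{\mathcal{A}}$, plus compactness of the embedding for the singular term --- is more solid than the paper's claim that $J$ itself is convex, which is questionable because $u\mapsto -|u|^{1-\gamma(x)}$ fails convexity across $u=0$.)

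The genuine gap is in your final step, where you pass from the inequality to the full weak formulation for arbitrary $\phi\in W_0^{1,\Phi}(\Omega)$ by dominated convergence based on the interior bound $u_0\geq c_K>0$ on compact subsets $K$. A general $\phi\in W_0^{1,\Phi}(\Omega)$ is neither bounded nor compactly supported: its support may reach $\partial\Omega$, where $u_0$ vanishes in the trace sense and $u_0^{-\gamma(x)}$ is uncontrolled, so there is no integrable dominating function for the quotient $\bigl(|u_0+t\phi|^{1-\gamma(x)}-u_0^{1-\gamma(x)}\bigr)/t$ near the boundary, and the two-sided limit you invoke is not justified. As written, your argument proves equality only for $\phi\in C_c^\infty(\Omega)$ --- and even that implicitly requires $u_0$ to be locally bounded away from zero, i.e., some interior continuity of $u_0$, which is an extra regularity input not established in the Musielak--Sobolev setting. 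To finish along your lines you would additionally need a density step, for instance noting that your own inequality makes $T(\phi):=\int_\Omega g(x)u_0^{-\gamma(x)}\phi\,dx$ a bounded linear functional (since $|T(\phi)|\leq T(|\phi|)\leq C\|\phi\|_\Phi$, because $a(x,|\nabla u_0|)\nabla u_0\in L^{\bar\Phi}(\Omega)$ and $\||\phi|\|_\Phi=\|\phi\|_\Phi$), and then extending the identity by continuity; you do not carry this out. The paper avoids the issue entirely with the Lair--Shaker device: it differentiates $t\mapsto J(u_*+tu_*)$ at $t=0$ to obtain the identity $\mathcal{A}(\rho(u_*))\int_\Omega a(x,|\nabla u_*|)|\nabla u_*|^2dx=\int_\Omega g(x)u_*^{1-\gamma(x)}dx$, tests the one-sided inequality with $\Psi=(u_*+\varepsilon\phi)^+$ for arbitrary $\phi$, cancels the zeroth-order terms using this identity, and lets $\varepsilon\to 0$ using only that the measure of $\{u_*+\varepsilon\phi<0\}$ tends to zero; replacing $\phi$ by $-\phi$ then upgrades the resulting inequality to the weak formulation without any domination near the boundary or regularity of $u_*$ beyond a.e.\ positivity.
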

We define the functional $J:W_0^{1,\Phi}(\Omega)\rightarrow \mathbb{R}$ corresponding to problem \ref{e1.1} by
$$
J(u)= \widehat{\mathcal{A}}\left(\int_{\Omega}\Phi(x,|\nabla u|)dx\right)-\int_{\Omega}\frac{g(x)|u|^{1-\gamma(x)}}{1-\gamma(x)}dx
$$
where $\widehat{\mathcal{A}}(t)=\int^{t}_{0}\mathcal{A}(s)ds$.
\begin{definition} A function $u$ is called a weak solution to problem \ref{e1.1} if $u\in W_0^{1,\Phi}(\Omega)$ such that $u>0$ in $\Omega$ and
\begin{equation}
\mathcal{A}\left(\int_{\Omega}\Phi(x,|\nabla u|)dx\right)\int_{\Omega}a(x,|\nabla u|)\nabla u \cdot \nabla v dx=\int_{\Omega}g(x)u^{-\gamma(x)}vdx
\end{equation}
for all $v\in W_0^{1,\Phi}(\Omega)$.
\end{definition}
We would like to notice that due to singular term, the derivative operator $J^{\prime}$ is not continuous on $W_0^{1,\Phi}(\Omega)$, that is, $J$ is not Fr\'{e}chet differentiable on $W_0^{1,\Phi}(\Omega)$. Therefore, we must show that any global minimizer is in fact a solution to problem \ref{e1.1}. To this end, to obtain the main result given in Theorem 3.1, it is necessary to show that Lemma 3.3 holds.
\begin{lemma} The functional $J$ attains the global minimizer in $W_0^{1,\Phi}(\Omega)$, that is, there exists a function $u_{*}\in W_0^{1,\Phi}(\Omega)$ such that
\begin{equation}
m=J(u_{*})=\inf_{u\in W_0^{1,\Phi}(\Omega)}J(u)<0
\end{equation}
\end{lemma}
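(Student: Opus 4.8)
The plan is to apply the direct method of the calculus of variations: I would first establish that $J$ is coercive and bounded below on $W_0^{1,\Phi}(\Omega)$, then that $J$ is sequentially weakly lower semicontinuous, extract a minimizer from a minimizing sequence, and finally produce a competitor of negative energy to force $m<0$.

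For coercivity, integrating the growth condition (A0) gives $\frac{m_1}{\alpha}t^{\alpha}\le\widehat{\mathcal A}(t)\le\frac{m_2}{\alpha}t^{\alpha}$. Hence, using Proposition \ref{pro2.1}$(ii)$, for $\|u\|_\Phi>1$ one has $\widehat{\mathcal A}(\rho(u))\ge\frac{m_1}{\alpha}\rho(u)^{\alpha}\ge\frac{m_1}{\alpha}\|u\|_\Phi^{\alpha\varphi_0}$. For the lower-order term I would use the pointwise bound $|u|^{1-\gamma(x)}\le 1+|u|$ (valid since $0<1-\gamma(x)<1$) together with the boundedness of $g$ and the continuous embedding $W_0^{1,\Phi}(\Omega)\hookrightarrow L^{1}(\Omega)$ from Remark 2.7 to obtain $\int_\Omega \frac{g(x)|u|^{1-\gamma(x)}}{1-\gamma(x)}\,dx\le C(1+\|u\|_\Phi)$. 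Since $\alpha>1$ and $\varphi_0>1$ give $\alpha\varphi_0>1$, the leading term dominates, so $J(u)\to+\infty$ as $\|u\|_\Phi\to\infty$ and $J$ is bounded below.

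Next I would verify weak lower semicontinuity. The modular $\rho(u)=\int_\Omega\Phi(x,|\nabla u|)\,dx$ is convex (each $\Phi(x,\cdot)$ is convex) and strongly continuous, hence sequentially weakly lower semicontinuous; since $\widehat{\mathcal A}$ is continuous and nondecreasing (as $\mathcal A>0$), the composition $u\mapsto\widehat{\mathcal A}(\rho(u))$ inherits weak lower semicontinuity via $\liminf_n\widehat{\mathcal A}(\rho(u_n))\ge\widehat{\mathcal A}(\liminf_n\rho(u_n))\ge\widehat{\mathcal A}(\rho(u))$. For the singular term, the compact embedding $W_0^{1,\Phi}(\Omega)\hookrightarrow\hookrightarrow L^{r(x)}(\Omega)$ of Remark 2.7 lets me pass from $u_n\rightharpoonup u_*$ to $u_n\to u_*$ a.e. along a subsequence; since $0<1-\gamma(x)<1$, the integrand $\frac{g|u_n|^{1-\gamma(x)}}{1-\gamma(x)}$ converges a.e. and is dominated, so by dominated convergence $\int_\Omega \frac{g|u_n|^{1-\gamma}}{1-\gamma}\,dx\to\int_\Omega \frac{g|u_*|^{1-\gamma}}{1-\gamma}\,dx$, i.e.\ this term is weakly continuous. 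Consequently $J$ is weakly lower semicontinuous, and taking a minimizing sequence (bounded by coercivity, with a weakly convergent subsequence by the reflexivity in Proposition 2.2) yields a minimizer $u_*$ with $J(u_*)=m=\inf_{W_0^{1,\Phi}(\Omega)}J$.

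It remains to show $m<0$. Fix a nonnegative $u_0\in W_0^{1,\Phi}(\Omega)$, $u_0\not\equiv0$, chosen so that $C_0:=\int_\Omega\frac{g(x)u_0^{1-\gamma(x)}}{1-\gamma(x)}\,dx>0$; this is possible because $g\ge0$ is nontrivial by (G0). For $0<t<1$, convexity of $\Phi$ with $\Phi(x,0)=0$ gives $\rho(tu_0)\le t\rho(u_0)$, so $\widehat{\mathcal A}(\rho(tu_0))\le\frac{m_2}{\alpha}\rho(u_0)^{\alpha}t^{\alpha}=:C_2\,t^{\alpha}$, while $t^{1-\gamma(x)}\ge t^{1-\gamma^-}$ yields $\int_\Omega\frac{g|tu_0|^{1-\gamma(x)}}{1-\gamma(x)}\,dx\ge C_0\,t^{1-\gamma^-}$, where $\gamma^-:=\min_{\overline\Omega}\gamma\in(0,1)$. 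Hence $J(tu_0)\le t^{1-\gamma^-}\bigl(C_2\,t^{\alpha-(1-\gamma^-)}-C_0\bigr)$, and since $\alpha-(1-\gamma^-)>0$ the bracket is negative for $t$ small, giving $J(tu_0)<0$ and therefore $m<0$. I expect the main obstacle to be the weak lower semicontinuity step, where one must simultaneously control the nonlocal composition $\widehat{\mathcal A}(\rho(\cdot))$ and confirm the weak continuity of the variable-exponent concave term; this relies essentially on the compact embedding supplied by Remark 2.7 and on the sublinear nature ($0<1-\gamma(x)<1$) of the lower-order term, whereas the coercivity estimate and the negative-energy competitor are comparatively routine once Proposition \ref{pro2.1} and the embeddings are in hand.
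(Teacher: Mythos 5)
Your proposal is correct, and it departs from the paper at the one step that matters. Both arguments run the direct method with the same skeleton (coercivity from $\widehat{\mathcal{A}}(t)\ge\frac{m_1}{\alpha}t^{\alpha}$ and Proposition 2.3, a bounded minimizing sequence, reflexivity, weak lower semicontinuity, and a small-$t$ competitor for $m<0$), but the paper obtains weak lower semicontinuity by claiming $J$ is \emph{convex} (via its Proposition 2.7) and combining convexity with continuity, whereas you prove it directly by splitting $J$: the modular $\rho$ is convex and norm-continuous, hence weakly lsc, and the composition with the continuous nondecreasing $\widehat{\mathcal{A}}$ preserves this (your monotonicity step $\liminf_n\widehat{\mathcal{A}}(\rho(u_n))\ge\widehat{\mathcal{A}}(\liminf_n\rho(u_n))\ge\widehat{\mathcal{A}}(\rho(u_*))$ is valid precisely because $\mathcal{A}>0$ makes $\widehat{\mathcal{A}}$ increasing), while the sublinear term is weakly continuous by the compact embedding. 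Your route is not just different but safer: $J$ cannot actually be convex, since $J(u)=J(-u)$, $J(0)=0$, and the lemma itself produces $u$ with $J(u)<0$, so convexity at the midpoint $0=\frac{1}{2}u+\frac{1}{2}(-u)$ would force $J(u)\ge 0$; the paper's verification only checks the inequality of Proposition 2.7 for one particular pair $(\theta,\beta)$ rather than for all admissible pairs, which is where that argument breaks. Two smaller contrasts: for $m<0$ the paper uses the $\varphi_0$-quasihomogeneity $\Phi(x,ts)\le t^{\varphi_0}\Phi(x,s)$ to get $t^{\alpha\varphi_0}$ against $t^{1-\gamma^{+}}$, while your plain-convexity bound $\rho(tu_0)\le t\rho(u_0)$ gives $t^{\alpha}$ against $t^{1-\gamma^{-}}$ --- both exponent comparisons close, and your choice $t^{1-\gamma^{-}}$ is in fact the correct lower bound for $0<t<1$; and your coercivity estimate via $|u|^{1-\gamma(x)}\le 1+|u|$ and the $L^{1}$ embedding is more elementary than the paper's variable-exponent H\"older bound and suffices since $\alpha\varphi_0>1$. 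The one detail you should tighten is the phrase ``is dominated'' in the dominated-convergence step: the dominating function must not depend on $n$, so you need the standard fact that a sequence converging in $L^{r(x)}(\Omega)$ has a subsequence dominated by a fixed $L^{r(x)}$ function (or argue via Vitali using equi-integrability of $|u_n|^{1-\gamma(x)}\le 1+|u_n|$), and then identify the limit along the full minimizing sequence by the subsequence-of-subsequences argument.
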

\begin{proof}
By (G0), (A0), H\"{o}lder inequality, Proposition 2.3 and the continuous embeddings $W_0^{1,\Phi}(\Omega)\hookrightarrow L^{p(x)}(\Omega)$ and $W_0^{1,\Phi}(\Omega)\hookrightarrow L^{\frac{p(x)}{p(x)+\gamma(x)-1}}(\Omega)$, it follows
\begin{align*}
  |J(u)| & \leq \frac{m_{2}}{\alpha}\left(\int_{\Omega}\Phi(x,|\nabla u|)dx\right)^{\alpha}+\frac{|g|_{\infty}}{1-\gamma^{+}}\int_{\Omega}|u|^{1-\gamma(x)}dx\\
  & \leq
 \frac{m_{2}}{\alpha}\|u\|_{\Phi}^{\alpha\varphi^{0}}+\frac{|g|_{\infty}}{1-\gamma^{+}}||u|^{1-\gamma(x)}|_{L^{\frac{p(x)}{1-\gamma(x)}}
  (\Omega)}|\mathbf{1}|_{L^{\frac{p(x)}{p(x)+\gamma(x)-1}}(\Omega)}\\
  & \leq \frac{m_{2}}{\alpha}\|u\|_{\Phi}^{\alpha\varphi^{0}}+\frac{c|g|_{\infty}}{1-\gamma^{+}}\|u\|^{1-\gamma^{-}}_{\Phi}<+\infty
\end{align*}
which shows that $J$ is well-defined on $W_0^{1,\Phi}(\Omega)$.\\
Denote $K:W_0^{1,\Phi}(\Omega)\rightarrow \mathbb{R}$ by $K(u):=\widehat{\mathcal{A}}(\rho(u))$.
Considering the fact that the functional $\rho$ is of class $C^{1}(W^{1,\Phi}(\Omega),\mathbb{R})$ (see Remark 2.6), and $\widehat{\mathcal{A}}$ is a continuous function, it is easy to see that the composition functional $
K$ is continuous on $W_0^{1,\Phi}(\Omega)$. Further, by the well-known inequality
$$
|a^{p}-b^{p}|\leq |a-b|^{p},\,\,\, \text{for any real numbers}\,\, a,b\geq 0\,\, \text{and} \,\,0<p<1,
$$
we obtain
\begin{align*}
  |J(u)-J(v)| & \leq\left| \widehat{\mathcal{A}}(\rho(u))-\widehat{\mathcal{A}}(\rho(v))\right|+\frac{|g|_{\infty}}{1-\gamma^{+}}\int_{\Omega}||u|^{1-\gamma(x)}-|v|^{1-\gamma(x)}|dx\\
  & \leq |K(u)-K(v)|+\frac{|g|_{\infty}}{1-\gamma^{+}}\int_{\Omega}|u-v|^{1-\gamma(x)}dx\\
  & \leq |K(u)-K(v)|+\frac{|g|_{\infty}}{1-\gamma^{+}}||u-v|^{1-\gamma(x)}|_{L^{\frac{p(x)}{1-\gamma(x)}}
  (\Omega)}|\mathbf{1}|_{L^{\frac{p(x)}{p(x)+\gamma(x)-1}}(\Omega)}\\
   & \leq |K(u)-K(v)|+\frac{c|g|_{\infty}}{1-\gamma^{+}}\|u-v\|^{1-\gamma^{-}}_{\Phi}
\end{align*}
for any $u,v \in W_0^{1,\Phi}(\Omega)$. Therefore $J$ is continuous on $W_0^{1,\Phi}(\Omega)$.\\
Let $u\in W_0^{1,\Phi}(\Omega)$. Then, applying the same steps as we did above, it follows
\begin{align}
  J(u) &  \geq \frac{m_{1}}{\alpha}\left(\int_{\Omega}\Phi(x,|\nabla u|)dx\right)^{\alpha}-\frac{|g|_{\infty}}{1-\gamma^{+}}||u|^{1-\gamma(x)}|_{L^{\frac{p(x)}{1-\gamma(x)}}
  (\Omega)}|\mathbf{1}|_{L^{\frac{p(x)}{p(x)+\gamma(x)-1}}(\Omega)} \nonumber\\
  & \geq \frac{m_{1}}{\alpha}\|u\|_{\Phi}^{\alpha\varphi_{0}}-c\|u\|^{1-\gamma^{-}}_{\Phi}
\end{align}
Since $\alpha\varphi_{0}>1-\gamma^{+}$, $J$ is coercive, namely, $J(u)\rightarrow +\infty$ as $\|u\|_{\Phi}\rightarrow\infty$, and bounded below.\\
Now, we shall show that $J$ is convex on $W_{0}^{1,\Phi}(\Omega)$. To this end, using (A0) and considering the assumptions for $g$ and $\gamma$, we have
\begin{align*}
J(u)& = \widehat{\mathcal{A}}\left(\int_{\Omega}\Phi(x,|\nabla u|)dx\right)-\int_{\Omega}\frac{g(x)|u|^{1-\gamma(x)}}{1-\gamma(x)}dx\\
  &\leq \frac{m_{2}}{\alpha}\max\{\|u\|_{\Phi}^{\alpha\varphi_{0}},\|u\|_{\Phi}^{\alpha\varphi^{0}}\}:=\theta
\end{align*}
and
\begin{align*}
J(v)& = \widehat{\mathcal{A}}\left(\int_{\Omega}\Phi(x,|\nabla v|)dx\right)-\int_{\Omega}\frac{g(x)|v|^{1-\gamma(x)}}{1-\gamma(x)}dx\\
  &\leq \frac{m_{2}}{\alpha}\max\{\|v\|_{\Phi}^{\alpha\varphi_{0}},\|v\|_{\Phi}^{\alpha\varphi^{0}}\}:=\beta
\end{align*}
for all $u,v \in W_0^{1,\Phi}(\Omega)$. Since $\Phi$ is convex, so is $\Theta(\cdot)=\int_{\Omega}\Phi(x,|\nabla \cdot|)dx$ (see \cite{Diening}).\\
Since $\widehat{\mathcal{A}}$ satisfies (A0), it is a continuous and monotone function on $(0,+\infty)$, and hence, its convexity follows. Therefore, for $0<\lambda<1$, we have
$$
\widehat{\mathcal{A}}(\Theta((1-\lambda)u+\lambda v))\leq (1-\lambda)\widehat{\mathcal{A}}(\Theta)+\lambda\widehat{\mathcal{A}}(\Theta)
$$
Therefore, considering the all pieces of information obtained above along with (A0) and (F1), it follows
\begin{align*}
J((1-\lambda)u+\lambda v))& = \widehat{\mathcal{A}}\left(\int_{\Omega}\Phi(x,|\nabla ((1-\lambda)u+\lambda v)|)dx\right)-\int_{\Omega}\frac{g(x)|(1-\lambda)u+\lambda v|^{1-\gamma(x)}}{1-\gamma(x)}dx\\
  &\leq (1-\lambda)\frac{m_{2}}{\alpha}\max\{\|u\|_{\Phi}^{\alpha\varphi_{0}},\|u\|_{\Phi}^{\alpha\varphi^{0}}\}
  +\lambda\frac{m_{2}}{\alpha}\max\{\|v\|_{\Phi}^{\alpha\varphi_{0}},\|v\|_{\Phi}^{\alpha\varphi^{0}}\}\\
  &\leq (1-\lambda)\theta+\lambda\beta
\end{align*}
Hence, by Proposition 2.7, $J$ is convex on $W_{0}^{1,\Phi}(\Omega)$.\\
As the functional $J$ is continuous, coercive and convex, it has a global minimum belonging to $W_{0}^{1,\Phi}(\Omega)$, which in turn becomes a solution to problem \ref{e1.1}.\\ Let us denote $$m=\inf_{u\in W_0^{1,\Phi}(\Omega)}J(u)$$ which is well-defined due to (3.3).\\
Now, applying the same arguments used in Remark 2.5, we can obtain that for $t>0$ small enough and $s>0$, it holds $\Phi(x,ts)\leq t^{\varphi_{0}}\Phi(x,s)$. Indeed, from assumption (2.5) and the properties of $\Phi(x,z)$, we can proceed as follows
\[
\int^{s}_{ts}\frac{\varphi_{0}}{z}dz \leq\int^{s}_{ts}\frac{\varphi(x,z)}{\Phi(x,z)}dz
\]
\[
\Phi(x,ts)\leq t^{\varphi_{0}}\Phi(x,s)
\]
For $0\neq \phi \in W_{0}^{1,\Phi}(\Omega)$ and $0<t\in\mathbb{R}$ small enough, it reads
\begin{align*}
J(t\phi)& = \widehat{\mathcal{A}}\left(\int_{\Omega}\Phi(x,|\nabla t\phi|)dx\right)-\int_{\Omega}\frac{g(x)|t\phi|^{1-\gamma(x)}}{1-\gamma(x)}dx\\
 &\leq \frac{m_{2}}{\alpha}t^{\alpha\varphi_{0}}\left(\int_{\Omega}\Phi(x,|\nabla \phi|)dx\right)^{\alpha}- \frac{t^{1-\gamma^{+}}}{1-\gamma^{+}}\int_{\Omega}g(x)|\phi|^{1-\gamma(x)}dx
\end{align*}
Since $1-\gamma^{+}<\alpha\varphi_{0}$, we obtain that $J(t\phi)< 0 $. If we set $t\phi=u$ with $\|u\|_{\Phi}<1$, we obtain that $m=\inf_{u\in W_0^{1,\Phi}(\Omega)}J(u)<0$. On the other hand, if we take into account the definition of $m$, there exists a minimizing sequence $(u_{n})$ of $W_{0}^{1,\Phi}(\Omega)$ such that
\begin{equation}
m=\lim_{n\rightarrow\infty}J(u_{n})<0
\end{equation}
Moreover, since $J(u_{n})=J(|u_{n}|)$ we may assume that $u_{n}\geq 0$. Due to the coercivity of $J$, $(u_{n})$ must be bounded in $W_{0}^{1,\Phi}(\Omega)$ other wise we would have that $J(u_{n})\rightarrow +\infty$ as $\|u_{n}\|_{\Phi}\rightarrow\infty$ which contradicts (3.4). Since $W_{0}^{1,\Phi}(\Omega)$ is reflexive there exists a subsequence, not relabelled, and $u_{*}\in W_{0}^{1,\Phi}(\Omega)$ such that

$u_{n}\rightharpoonup u_{*}$ in $W_{0}^{1,\Phi}(\Omega)$,

$u_{n}\rightarrow u_{*}$ in $L^{s(x)}(\Omega)$,\,\, $1\leq s(x)<p^{*}(x)$

$u_{n}(x) \rightarrow u_{*}(x) $ a.e. in $\Omega$.\\
Since $J$ is continuous and convex on $W_{0}^{1,\Phi}(\Omega)$, it is weakly lower semi-continuous on $W_{0}^{1,\Phi}(\Omega)$. Therefore,
\begin{align}
  m\leq J(u_{*}) & =\widehat{\mathcal{A}}\left(\int_{\Omega}\Phi(x,|\nabla u_{*}|)dx\right)-\int_{\Omega}\frac{g(x)|u_{*}|^{1-\gamma(x)}}{1-\gamma(x)}dx\\
    & \leq \liminf_{n\rightarrow\infty} J(u_{n})=m
\end{align}
which means
\begin{equation}
m=J(u_{*})=\inf_{W_0^{1,\Phi}(\Omega)}J(u)<0
\end{equation}
\end{proof}
\begin{proof}(\textbf{Proof of Theorem 3.1}) Since $m=J(u_{*})<0=J(0)$, it must be $u_{*}\geq 0$, $u_{*}\neq0$. For $\phi\in W_{0}^{1,\Phi}(\Omega)$, $\phi\geq 0$ and $t>0$, we have
\begin{align*}
  0\leq & \liminf_{t\rightarrow 0}\frac{J(u_{*}+t\phi)-J(u_{*})}{t}\\
  & \leq \mathcal{A}(\rho(u_{*}))\int_{\Omega}a(x,|\nabla u_{*}|)\nabla u_{*} \cdot \nabla \phi dx- \limsup_{t\rightarrow 0}\int_{\Omega}g(x)\frac{(u_{*}+t\phi)^{1-\gamma(x)}-u_{*}^{1-\gamma(x)}}{1-\gamma(x)}dx
\end{align*}
or
\begin{align}
 \limsup_{t\rightarrow 0}\int_{\Omega}g(x)\frac{(u_{*}+t\phi)^{1-\gamma(x)}-u_{*}^{1-\gamma(x)}}{1-\gamma(x)}dx & \leq \mathcal{A}(\rho(u_{*}))\int_{\Omega} a(x,|\nabla u_{*}|)\nabla u_{*} \cdot \nabla \phi dx
\end{align}
By the mean value theorem, there exists $\theta\in (0,1)$ such that
\begin{align}
  \int_{\Omega}g(x)\frac{(u_{*}+t\phi)^{1-\gamma(x)}-u_{*}^{1-\gamma(x)}}{1-\gamma(x)}dx = &  \int_{\Omega}g(x)(u_{*}+t\theta\phi)^{-\gamma(x)}\phi dx
\end{align}
On the other hand, since we have
$$
(u_{*}+t\theta\phi)^{-\gamma(x)}\phi \geq 0,\,\,\, \forall x \in \Omega
$$
and
\begin{equation*}
(u_{*}+t\theta\phi)^{-\gamma(x)}\phi\rightarrow u_{*}^{-\gamma(x)}\phi,\,\, as\,\, t\rightarrow 0,\, a.e.\, x \in \Omega
\end{equation*}
we can apply Fatou's lemma to (3.9), that is,
\begin{align}
 \limsup_{t\rightarrow 0}\int_{\Omega}g(x)\frac{(u_{*}+t\phi)^{1-\gamma(x)}-u_{*}^{1-\gamma(x)}}{1-\gamma(x)}dx & \geq \liminf_{t\rightarrow 0}\int_{\Omega}g(x)\frac{(u_{*}+t\phi)^{1-\gamma(x)}-u_{*}^{1-\gamma(x)}}{1-\gamma(x)}dx \nonumber\\
 &= \liminf_{t\rightarrow 0}\int_{\Omega}g(x)(u_{*}+t\theta\phi)^{-\gamma(x)}\phi dx \nonumber\\
 & \geq \int_{\Omega}g(x)u_{*}^{-\gamma(x)}\phi dx \geq 0 \label{3.13}
\end{align}
Thus, by (3.8) and (3.10) we can write
\begin{align}
\mathcal{A}(\rho(u_{*}))\int_{\Omega} a(x,|\nabla u_{*}|)\nabla u_{*} \cdot \nabla \phi dx- \int_{\Omega}g(x)u_{*}^{-\gamma(x)}\phi dx & \geq 0,\,\,\, \forall\phi\in W_{0}^{1,\Phi}(\Omega), \phi\geq 0
\end{align}

and hence, we obtain that function $u_{*}\in W_{0}^{1,\Phi}(\Omega)$ satisfies
\begin{align}
-\mathcal{A}(\rho(u_{*}))\mathrm{div}(a(x,|\nabla u_{*}|)\nabla u_{*}) & \geq 0\,\,\, \text{in}\,\, \Omega
\end{align}
in the weak sense. Since $u_{*}\geq 0$ and $u_{*}\neq0$, by the strong maximum principle for weak solutions, we must have
$$
u_{*}(x)>0,\,\,\, \forall x \in \Omega
$$
Next, we show that $u_{*}\in W_{0}^{1,\Phi}(\Omega)$ satisfies (3.1). The proof below has been adapted from one given in \cite{Lair}. For given $\delta>0$, define $\Lambda:[-\delta,\delta]\rightarrow (-\infty,\infty)$ by $\Lambda(t)=J(u_{*}+tu_{*})$. Then $\Lambda$ achieves its minimum at $t=0$. Thus,
\begin{align*}
 \frac{d}{dt}\Lambda(t)|_{t=0}=\frac{d}{dt}J(u_{*}+tu_{*})|_{t=0}= 0
\end{align*}
or
\begin{align}
 \mathcal{A}(\rho(u_{*}))\int_{\Omega} a(x,|\nabla u_{*}|)|\nabla u_{*}|^{2}dx- \int_{\Omega}g(x)u_{*}^{1-\gamma(x)}dx=0
\end{align}
Let us take $\phi\in W_{0}^{1,\Phi}(\Omega)$, and define $\Psi\in W_{0}^{1,\Phi}(\Omega)$ such that $\Psi:=(u_{*}+\varepsilon \phi)^{+}=\max\{0,u_{*}+\varepsilon \phi\}$, $\varepsilon>0$. Clearly, $\Psi\geq 0$. If we replace $\Psi$ both in (3.11) and (3.13), we have
\begin{align}
 0&\leq \mathcal{A}(\rho(u_{*}))\int_{\{u_{*}+\varepsilon \phi\geq 0\}} a(x,|\nabla u_{*}|)\nabla u_{*} \cdot \nabla (u_{*}+\varepsilon \phi)dx-\int_{\{u_{*}+\varepsilon \phi\geq 0\}}g(x)u_{*}^{-\gamma(x)}(u_{*}+\varepsilon \phi) dx \nonumber\\
 &=\mathcal{A}(\rho(u_{*}))\left(\int_{\Omega}-\int_{\{u_{*}+\varepsilon \phi< 0\}}\right) a(x,|\nabla u_{*}|)\nabla u_{*} \cdot \nabla (u_{*}+\varepsilon \phi)dx\nonumber\\
 &-\left(\int_{\Omega}-\int_{\{u_{*}+\varepsilon \phi< 0\}}\right)g(x)u_{*}^{-\gamma(x)}(u_{*}+\varepsilon \phi) dx \nonumber\\
 &=\mathcal{A}(\rho(u_{*}))\int_{\Omega} a(x,|\nabla u_{*}|)|\nabla u_{*}|^{2}dx- \int_{\Omega}g(x)u_{*}^{1-\gamma(x)}dx \nonumber\\
 &+\varepsilon\mathcal{A}(\rho(u_{*}))\int_{\Omega} a(x,|\nabla u_{*}|)\nabla u_{*} \cdot \nabla \phi dx- \varepsilon\int_{\Omega}g(x)u_{*}^{-\gamma(x)}\phi dx \nonumber\\
 &-\mathcal{A}(\rho(u_{*}))\int_{\{u_{*}+\varepsilon \phi< 0\}} a(x,|\nabla u_{*}|)\nabla u_{*} \cdot \nabla (u_{*}+\varepsilon \phi)dx+\int_{\{u_{*}+\varepsilon \phi< 0\}}g(x)u_{*}^{-\gamma(x)}(u_{*}+\varepsilon \phi) dx \nonumber\\
 &=\varepsilon\left(\mathcal{A}(\rho(u_{*}))\int_{\Omega} a(x,|\nabla u_{*}|)\nabla u_{*} \cdot \nabla \phi dx-\int_{\Omega}g(x)u_{*}^{-\gamma(x)}\phi dx\right)\nonumber\\
 &-\mathcal{A}(\rho(u_{*}))\int_{\{u_{*}+\varepsilon \phi< 0\}} a(x,|\nabla u_{*}|)\nabla u_{*} \cdot \nabla (u_{*}+\varepsilon \phi)dx+\int_{\{u_{*}+\varepsilon \phi< 0\}}g(x)u_{*}^{-\gamma(x)}(u_{*}+\varepsilon \phi) dx \\
 &\leq \varepsilon\left(\mathcal{A}(\rho(u_{*}))\int_{\Omega} a(x,|\nabla u_{*}|)\nabla u_{*} \cdot \nabla \phi dx-\int_{\Omega}g(x)u_{*}^{-\gamma(x)}\phi dx\right) \\
 &-\varepsilon\mathcal{A}(\rho(u_{*}))\int_{\{u_{*}+\varepsilon \phi< 0\}} a(x,|\nabla u_{*}|)\nabla u_{*} \cdot \nabla \phi dx \nonumber
\end{align}
Considering that $u_{*}>0$ and Lebesgue measure of the domain of integration $\{u_{*}+\varepsilon \phi<0\}$ tends to zero as $\varepsilon \rightarrow 0$, and (A0) it reads
$$
\mathcal{A}(\rho(u_{*}))\int_{\{u_{*}+\varepsilon \phi< 0\}} a(x,|\nabla u_{*}|)\nabla u_{*} \cdot \nabla \phi dx \rightarrow 0,\,\,\, \text{as} \, \varepsilon \rightarrow 0
$$
Moreover, considering that $a(x,\cdot)\in(0,\infty)$ and (A0), we can drop the term $$-\mathcal{A}(\rho(u_{*}))\int_{\{u_{*}+\varepsilon \phi< 0\}} a(x,|\nabla u_{*}|)|\nabla u_{*}|^{2}dx$$ in (3.14) since it is negative.
Therefore, dividing (3.15) by $\varepsilon$ and letting $\varepsilon \rightarrow 0$, we obtain
\begin{align}
\mathcal{A}(\rho(u_{*}))\int_{\Omega} a(x,|\nabla u_{*}|)\nabla u_{*} \cdot \nabla \phi dx-\int_{\Omega}g(x)u_{*}^{-\gamma(x)}\phi dx
 & \geq 0
\end{align}
Considering that $\phi\in W_{0}^{1,\Phi}(\Omega)$ is arbitrary, (3.16) holds for $-\phi$ as well. As a conclusion, we obtain
\begin{align}
\mathcal{A}(\rho(u_{*}))\int_{\Omega} a(x,|\nabla u_{*}|)\nabla u_{*} \cdot \nabla \phi dx-\int_{\Omega}g(x)u_{*}^{-\gamma(x)}\phi dx
 &= 0
\end{align}
that is to say, $u_{*}\in W_{0}^{1,\Phi}(\Omega)$ is a weak solution to problem \ref{e1.1}. Additionally, since $J$ is coercive and bounded below on $W_{0}^{1,\Phi}(\Omega)$, $u_{*}$ is a positive ground state solution to problem \ref{e1.1}, i.e., a solution with minimum action among all nontrivial solutions. Additionally, since $J(u_{*})<0$ this solution has a negative energy level.
\end{proof}
\begin{theorem} Suppose the conditions of Theorem 3.1 hold. Additionally, assume the following conditions hold:
\begin{itemize}
  \item [(A1)] $\mathcal{A}$ is bounded on $(0,\infty)$, i.e., for any $t\in(0,\infty)$, there are real numbers $\underline{c},\overline{c}>0$ such that $\underline{c}\leq \mathcal{A}(t)\leq \overline{c}$.
  \item [(a1)] There exists a real number $\underline{a}>0$ such that $a(x,t)\geq \underline{a}>0$ holds for any $t\in \mathbb{R}$.
\end{itemize}
Then $u_{*}\in W_{0}^{1,\Phi}(\Omega)$ is the unique solution to problem \ref{e1.1}.
\end{theorem}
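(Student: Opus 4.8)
The plan is to derive a contradiction from the existence of two distinct weak solutions by isolating the monotonicity built into the variational structure. Suppose $u_{1},u_{2}\in W_{0}^{1,\Phi}(\Omega)$ are weak solutions in the sense of Definition 3.2, so that $u_{1},u_{2}>0$ in $\Omega$. The first step is to insert the admissible test function $v=u_{1}-u_{2}$ into the weak formulation (3.1) written for $u_{1}$ and for $u_{2}$ and to subtract the two identities. Writing $\rho(u)=\int_{\Omega}\Phi(x,|\nabla u|)\,dx$ and setting
\begin{equation*}
L:=\mathcal{A}(\rho(u_{1}))\int_{\Omega}a(x,|\nabla u_{1}|)\nabla u_{1}\cdot\nabla(u_{1}-u_{2})\,dx-\mathcal{A}(\rho(u_{2}))\int_{\Omega}a(x,|\nabla u_{2}|)\nabla u_{2}\cdot\nabla(u_{1}-u_{2})\,dx,
\end{equation*}
the subtraction yields $L=\int_{\Omega}g(x)\big(u_{1}^{-\gamma(x)}-u_{2}^{-\gamma(x)}\big)(u_{1}-u_{2})\,dx=:R$.

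Next I would estimate the two sides in opposite directions. For the right-hand side, since $g\geq 0$ and, for each fixed $x$, the map $t\mapsto t^{-\gamma(x)}$ is strictly decreasing on $(0,\infty)$, the integrand is pointwise nonpositive, so $R\leq 0$. For the left-hand side the decisive remark is that $L$ is precisely the monotonicity pairing of the nonlocal principal operator: with $K(u):=\widehat{\mathcal{A}}(\rho(u))$ one has, by Remark 2.6, $\langle K'(u),v\rangle=\mathcal{A}(\rho(u))\int_{\Omega}a(x,|\nabla u|)\nabla u\cdot\nabla v\,dx$, hence $L=\langle K'(u_{1})-K'(u_{2}),u_{1}-u_{2}\rangle$. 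Since $\rho$ is strictly convex on $W_{0}^{1,\Phi}(\Omega)$ (the integrand $\xi\mapsto\Phi(x,|\xi|)$ is strictly convex and a function in $W_{0}^{1,\Phi}(\Omega)$ is determined by its gradient) and $\widehat{\mathcal{A}}$ is increasing and convex, as recorded in the proof of Lemma 3.3, the composition $K$ is strictly convex, so $K'$ is strictly monotone: $L\geq 0$, with $L=0$ only if $u_{1}=u_{2}$. Combining the two bounds gives $0\leq L=R\leq 0$, whence $L=0$ and therefore $u_{1}=u_{2}$.

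The additional hypotheses enter to make this scheme rigorous rather than to supply the monotonicity. Conditions (A1) and (a1) guarantee the uniform lower bound $\mathcal{A}(\rho(u))\,a(x,|\nabla u|)\geq \underline{c}\,\underline{a}>0$ and, together with the strict positivity of the solutions, the finiteness of the singular integrals $\int_{\Omega}g(x)u_{i}^{-\gamma(x)}(u_{1}-u_{2})\,dx$, which is what legitimizes using $u_{1}-u_{2}$ as a test function in (3.1). I expect the main obstacle to be the nonlocal Kirchhoff coefficient: because $\mathcal{A}(\rho(u_{1}))\neq\mathcal{A}(\rho(u_{2}))$ in general, one cannot factor a common constant out of $L$ and appeal to the pointwise monotonicity of $\xi\mapsto a(x,|\xi|)\xi$. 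The way around this is exactly the identification of $L$ with the global pairing $\langle K'(u_{1})-K'(u_{2}),u_{1}-u_{2}\rangle$, where the strict convexity of $K=\widehat{\mathcal{A}}\circ\rho$ absorbs the mismatch between the two nonlocal coefficients and still forces $L>0$ whenever $u_{1}\neq u_{2}$.
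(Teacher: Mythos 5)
Your argument follows the paper's skeleton for its first half: you subtract the two weak formulations tested with $v=u_{1}-u_{2}$ and bound the right-hand side by the pointwise monotonicity of $t\mapsto t^{-\gamma(x)}$, exactly as in the paper's (3.18)--(3.20). Where you diverge is the lower bound on $L$. The paper does \emph{not} argue through convexity of $K=\widehat{\mathcal{A}}\circ\rho$; it invokes the pointwise inequality of Lemma 2.4 in \cite{Rabil} (displayed as (3.21)), applied with the two distinct constants $k=\mathcal{A}(\rho(u_{*}))$ and $l=\mathcal{A}(\rho(\upsilon_{*}))$, where (A1) keeps $k,l$ inside a fixed compact subinterval of $(0,\infty)$ and (a1) supplies the lower bound $a(x,t)\geq\underline{a}$; this yields $L\geq C(\delta)\,\rho(u_{*}-\upsilon_{*})$, and Proposition 2.3 converts $\rho(u_{*}-\upsilon_{*})=0$ into $u_{*}=\upsilon_{*}$. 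In other words, the hypotheses (A1) and (a1) are consumed precisely by that inequality, which is the paper's device for absorbing the mismatch $\mathcal{A}(\rho(u_{1}))\neq\mathcal{A}(\rho(u_{2}))$ that you correctly identify as the main obstacle.

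The gap in your route is the strict convexity of $K$. Since $\widehat{\mathcal{A}}{}'=\mathcal{A}$, convexity of $\widehat{\mathcal{A}}$ is \emph{equivalent} to $\mathcal{A}$ being nondecreasing, and neither (A0) nor (A1) implies this: for instance
\begin{equation*}
\mathcal{A}(t)=t^{\alpha-1}\left(\tfrac{m_{1}+m_{2}}{2}+\tfrac{m_{2}-m_{1}}{2}\sin(\log t)\right)
\end{equation*}
satisfies the two-sided bound in (A0) yet is non-monotone whenever $\alpha-1$ is small relative to $m_{2}-m_{1}$, so $\widehat{\mathcal{A}}$ fails to be convex. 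The sentence from the proof of Lemma 3.3 that you lean on (``continuous and monotone\dots hence its convexity follows'') is a non sequitur --- an increasing function need not be convex --- so it cannot serve as established fact; and without convexity of $\widehat{\mathcal{A}}$ the pairing $\langle K'(u_{1})-K'(u_{2}),u_{1}-u_{2}\rangle$ has no reason to be nonnegative (already pointwise, with $a\equiv 1$, $k=2$, $l=1$, one has $(k\xi-l\eta)(\xi-\eta)<0$ for $\xi=1$, $\eta=3/2$, which incidentally shows that the coefficient mismatch is a genuine obstruction, and indeed that even the paper's transcription of (3.21) must be read with care when $k\neq l$). A telltale sign of the gap is that your proof makes no essential use of (A1) or (a1): the integrability role you assign them is vacuous, since (3.1) is assumed to hold for \emph{every} $v\in W_{0}^{1,\Phi}(\Omega)$, including $u_{1}-u_{2}$; so if your argument were complete it would prove uniqueness without the very hypotheses the theorem adds. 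If you strengthen the assumptions by requiring $\mathcal{A}$ to be nondecreasing, your convexity argument closes cleanly and is arguably more transparent than the citation of \cite{Rabil}; under the stated hypotheses, however, the step ``$L\geq 0$ with equality only when $u_{1}=u_{2}$'' is unproven.
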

\begin{proof} Let us assume $\upsilon_{*}$ is an another solution to problem \ref{e1.1}. Then, from (3.1), we have
\begin{align*}
 & \mathcal{A}\left(\int_{\Omega}\Phi(x,|\nabla u_{*}|)dx\right)\int_{\Omega}a(x,|\nabla u_{*}|)\nabla u_{*} \cdot \nabla (u_{*}-\upsilon_{*}) dx-\int_{\Omega}g(x)u_{*}^{-\gamma(x)}(u_{*}-\upsilon_{*}) dx \\
 &-\mathcal{A}\left(\int_{\Omega}\Phi(x,|\nabla \upsilon_{*}|)dx\right)\int_{\Omega}a(x,|\nabla \upsilon_{*}|)\nabla \upsilon_{*} \cdot \nabla (u_{*}-\upsilon_{*}) dx+\int_{\Omega}g(x)\upsilon_{*}^{-\gamma(x)}(u_{*}-\upsilon_{*})dx=0
\end{align*}
or
\begin{align}
 &\int_{\Omega}\left(\mathcal{A}(\rho(u_{*}))a(x,|\nabla u_{*}|)\nabla u_{*}-\mathcal{A}(\rho(\upsilon_{*}))a(x,|\nabla \upsilon_{*}|)\nabla \upsilon_{*}\right) \cdot \nabla (u_{*}-\upsilon_{*}) dx \\
 &=\int_{\Omega}g(x)(u_{*}^{-\gamma(x)}-\upsilon_{*}^{-\gamma(x)})(u_{*}-\upsilon_{*})dx
\end{align}
For $\alpha \in (0,1)$ and $x,y \geq 0$, we have the elementary inequality
\begin{align}
  & (x^{-\alpha}-y^{-\alpha})(x-y)\leq 0
\end{align}
Moreover, by Lemma 2.4 given in \cite{Rabil}, we have the following inequality:
for any $k,l>0$, there exists a positive constant $C(\delta )$, $\delta =\min\{1,a_{0},k,l\}$, such that
\begin{equation}
(ka(|\xi |)\xi -la(|\eta |)\eta)\cdot(\xi -\eta)\geq C(\delta )\Phi (|\xi -\eta|)\quad \forall \xi ,\eta \in \mathbb{R}^{N}
\end{equation}
holds, provided that (A1) and (a1) hold. Thus, if we apply (3.21) and (3.20) to the lines (3.18) and (3.19) respectively, we obtain
\begin{align*}
0\leq &C(\delta )\int_{\Omega}\Phi(x,|\nabla u_{*}-\nabla\upsilon_{*}|) dx\leq 0
\end{align*}
or
\begin{align*}
\rho(u_{*}-\upsilon_{*})=0
\end{align*}
Therefore, by Proposition 2.3, we have
\begin{align*}
0\leq\min\{\|u_{*}-\upsilon_{*}\|^{\varphi_{0}}_{\Phi},\|u_{*}-\upsilon_{*}\|^{\varphi^{0}}_{\Phi}\} \leq \rho(u_{*}-\upsilon_{*})=0
\end{align*}
which means that
\begin{align*}
\|u_{*}-\upsilon_{*}\|_{\Phi}= 0
\end{align*}
Thus, we have $u_{*}=\upsilon_{*}$ in $W_{0}^{1,\Phi}(\Omega)$, that is, $u_{*}$ is the unique solution to problem \ref{e1.1}.
\end{proof}

\end{document}